\newtheorem{thm}{Theorem}[section]
\newtheorem{lemma}[thm]{Lemma}
\theoremstyle{definition}
\renewcommand{\phi}{\varphi}		
\newcounter{mnotecount}  
\renewcommand{\themnotecount}{\arabic{mnotecount}} 
\newcommand{\mnote}[1]
{\protect{\stepcounter{mnotecount}}$^{\mbox{\footnotesize  $
      \bullet$\themnotecount}}$\marginpar{\parbox[b]{1.2in}{\raggedright\tiny\em
   \themnotecount:\! #1}}}
\title[Areas of Polygons]{Areas of Triangles and Other Polygons with Vertices from Various Sequences.}
\author{Virginia Johnson}
\address{%
Department of Mathematics,
Columbia College,\goodbreak
Columbia, S.C. 29208, USA
}
\email{vjohnson\char'100columbiasc.edu}
\urladdr{http://www.columbiasc.edu/about/directory?view=employee\&id=263}
\author{Charles K. Cook}
\address{%
Emeritus,
University of South Carolina Sumter,\goodbreak
Sumter, S.C. 29208, USA
}
\email{charliecook29150@aim.com}
\begin{document}

\begin{abstract}
Motivated by Elementary Problem B-1172 \cite{Edwards}, formulas for the areas of triangles and other polygons having vertices with coordinates taken from various sequences of integers are obtained.  
\end{abstract} 

\maketitle
\section{Introduction}\label{intro}

The verification of formulas for the area of triangles having coordinates
$(F_n, F_{n + k})$, $ (F_{n + 2 k}, F_{n + 3 k})$,  and  $(F_{n + 4 k}, F_{n +5
k} )$  for both even and odd values of $ k$ was offered as a problem in \emph{The
Fibonacci Quarterly} \cite{Edwards}.   In addition solvers were asked to find
similar formulas for the Lucas vertices cases. An earlier related problem in
\emph{The Fibonacci Quarterly}  \cite{Opher-Shurki} invited solvers to find
the area of a polygon with consecutive Fibonacci numbers as vertex coordinates beginning with $ (F_1,F_2)$ and  running  through $(F_{2n -1},F_{2n})$. In this paper formulas will be obtained for triangles and
$m$-sided polygons with various sequential elements as vertices with the form 
$(p_n,p_{n+k}),\,\, (p_{n+2k},p_{n+3k}),\,\,\dots ,(p_{n+(2m-2)k},p_{n+(2m-1)k)}$.    Similar, but different, approaches to the topics presented here have been considered in \cite{zvon} and \cite{panda}.

\section{Triangles with Fibonacci Type Coordinates
}
The area formulas for a triangle 
with vertices given by Fibonacci numbers  $(F_n, F_{n+k}), \, (F_{n+2k}, F_{n+3k})$,  and $(F_{n+4k}, F_{n+5k}) $
for both even and odd values of $k$ were indicated in the statement of a problem given in 
\cite{Edwards}.  The Problem Poser invited the solvers to show that the area is 
$$
\frac{5F_k^4L_k}{2} \ \text{ if $k$ is even and  }\  \frac{F_k^2L_k^3}{2} \text{\hspace{.1in} if $k$ is odd} .
$$
A proof should appear in a further issue of \emph{The Fibonacci Quarterly}, but for the purposes of completeness, proof will be provided here and treated as a theorem.

\begin{thm}\label{thm:fibtriangle}
The area formulas for triangles with Fibonacci vertices $(F_n, F_{n+k})$, $(F_{n+2k}, F_{n+3k})$,  and $(F_{n+4k}, F_{n+5k}) $ are
$$
\frac{5F_k^4L_k}{2}\  \text{ if $k$ is even and  }\  \frac{F_k^2L_k^3}{2} \text{ if $k$ is odd} .
$$
\end{thm}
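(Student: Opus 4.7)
The plan is to apply the shoelace formula to the three given vertices and reduce the resulting expression using standard Fibonacci--Lucas identities. Writing
\[
A = \tfrac{1}{2}\bigl| F_n(F_{n+3k}-F_{n+5k}) + F_{n+2k}(F_{n+5k}-F_{n+k}) + F_{n+4k}(F_{n+k}-F_{n+3k}) \bigr|,
\]
each bracketed difference has the form $F_{m+j}-F_{m-j}$, which equals $F_j L_m$ when $j$ is even and $F_m L_j$ when $j$ is odd. The middle difference has $j=2k$ and always contributes $F_{2k}L_{n+3k}$; the outer two have $j=k$ and so their reductions depend on the parity of $k$. This immediately splits the argument into the two cases.

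In each case the resulting sum consists of three Fibonacci--Lucas products, and I would expand each using $F_a L_b = F_{a+b} + (-1)^b F_{a-b}$. The $F_{a+b}$ pieces combine (using $L_k F_{2n+5k} = F_{2n+6k} + (-1)^k F_{2n+4k}$ in the middle step) and cancel the $n$-dependence completely, leaving a short closed-form expression up to an overall sign $(-1)^n$ that is absorbed by the absolute value. The residuals are $F_k(F_{4k}-2F_{2k})$ when $k$ is even and $F_k L_k(F_{3k}-F_k)$ when $k$ is odd. Using $F_{2k}=F_k L_k$, $L_{2k}=L_k^2-2(-1)^k$, and the fundamental identity $L_k^2-5F_k^2=4(-1)^k$, one sees that $F_{4k}-2F_{2k}=5F_k^3 L_k$ for even $k$ and $F_{3k}-F_k = F_k L_k^2$ for odd $k$, which after dividing by $2$ give the claimed formulas.

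The principal obstacle is sign bookkeeping: factors of $(-1)^k$, $(-1)^n$, $(-1)^{n+jk}$, and $F_{-m}=(-1)^{m+1}F_m$ accumulate throughout, and one must verify carefully that all $n$-dependence really does cancel. An alternative route would substitute Binet's form $F_n=(\phi^n-\psi^n)/\sqrt{5}$ (with $\phi\psi=-1$) directly into the shoelace determinant and manipulate symbolically; this replaces identity-chasing with polynomial algebra in $\phi^k$ and $\psi^k$ but demands equal care with the parity split.
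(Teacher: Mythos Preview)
Your plan is correct and takes a genuinely different route from the paper. The paper does not manipulate Fibonacci--Lucas identities at all; instead it proves a general lemma for any sequence of the form $f(n)=ar^{n}+b(-1)^{n+1}r^{-n}$, evaluating the $2\times2$ area determinant symbolically to obtain
\[
A=\frac{ab(-1)^{n}}{2}\Bigl(r^{k}-\tfrac{1}{r^{k}}\Bigr)^{3}\Bigl(r^{k}+\tfrac{1}{r^{k}}\Bigr)\Bigl(r^{k}+\tfrac{(-1)^{k+1}}{r^{k}}\Bigr),
\]
and then specialises with $r=(1+\sqrt5)/2$, $a=b=1/\sqrt5$, reading off $F_k$ and $L_k$ from the factors $r^{k}\pm r^{-k}$ according to the parity of $k$. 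This is exactly the ``alternative route'' you sketch at the end. The paper's method buys uniformity: the same lemma instantly gives the Lucas, generalized Fibonacci, Pell, and Pell--Lucas triangle areas by varying $a,b,r$. Your method buys elementarity: no Binet forms, just the shoelace formula and a few standard identities, with the parity split emerging naturally from $F_{m+j}-F_{m-j}$.

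One small wrinkle worth flagging: in the odd-$k$ case two of your three summands are of the form $L_k\cdot F_{a}F_{b}$ rather than (scalar)$\cdot F_{a}L_{b}$, so the identity $F_{a}L_{b}=F_{a+b}+(-1)^{b}F_{a-b}$ does not apply to them directly. The fix is immediate---combine those two terms first via $F_{n}+F_{n+2k}=F_{k}L_{n+k}$ (valid for $k$ odd), after which everything is an $F\cdot L$ product and the rest of your argument goes through verbatim. Your stated residuals $F_{k}(F_{4k}-2F_{2k})$ and $F_{k}L_{k}(F_{3k}-F_{k})$, and their reductions to $5F_{k}^{4}L_{k}$ and $F_{k}^{2}L_{k}^{3}$, are correct.
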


\vspace{.25in}

The signed area of this triangle can be calculated using the determinant formula from vector calculus:\cite{area-tri}
\begin{equation}
A =\dfrac{1}{2} \left| \begin{matrix} 
      F_{n+2k}-F_n& F_{n+3k}-F_{n+k}\\
      F_{n+4k}-F_n& F_{n+5k}-F_{n+k}
       \end{matrix}   \right|\\
      =\dfrac{1}{2} \left| \begin{matrix} 
       F_{n+2k}-F_n& F_{n+3k}-F_{n+k}\\
          F_{n+4k}-F_{n+2k}& F_{n+5k}-F_{n+3k}
       \end{matrix}   \right|.
\end{equation}

 It will be useful to find this determinant for the general case first.   Let $a,b,r$ be real numbers with $r\ne 0$. Define a real valued function on the integers by 
 \begin{equation} \label{recursion}
 f(n)=ar^n+b\dfrac{(-1)^{n+1}}{r^n}.
 \end{equation}
 
 This is the general solution to the recursion 
 $$
f(n)= pf(n-1) +f(n-2)
$$
where $p$ and $r$ are related by
$p= r-\frac1r$.
  
 \begin{lemma}\label{gentir}
 The area, $A$, of triangles with vertices $\left(f(n),f(n+k)\right)$,\\  $(f(n+2k),f(n+3k)),$ and $(f(n+4k),\,f(n+5k)$ is 
 
 \begin{equation}\label{gen}
 A= \frac{ab(-1)^n }{2}\left(r^{k}-\frac1{r^k}\right)^3\left(r^{k}+\frac1{r^k}\right)\left(r^{k}+\frac{(-1)^{k+1}}{r^k}\right).
\end{equation}
\end{lemma}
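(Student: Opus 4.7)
The plan is to substitute the closed form $f(n)=ar^n+b(-1)^{n+1}r^{-n}$ directly into the $2\times 2$ determinant written in the second form of equation~(1), and then reduce it to a Laurent polynomial in $r$. The key preliminary observation is that every entry of that matrix has the shape $f(m+2k)-f(m)$ for some $m$, and for any $m$ one has
$$
f(m+2k)-f(m)=(r^{2k}-1)\bigl[ar^{m}-b(-1)^{m+1}r^{-m-2k}\bigr],
$$
since $(-1)^{(m+2k)+1}=(-1)^{m+1}$. This lets me pull a common factor of $(r^{2k}-1)$ out of each of the four entries and thereby extract $(r^{2k}-1)^{2}$ from the determinant.

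Next I would expand the reduced $2\times 2$ determinant. Writing $\varepsilon=(-1)^{n+1}$ and using that $(-1)^{(n+jk)+1}=\varepsilon(-1)^{jk}$, the simplification of the parity factors splits into two cases depending on whether the column shift is by an even or odd multiple of $k$: entries in the same column differ only by a pure power of $r$, while passing from column~1 to column~2 introduces an extra $(-1)^k$ on the $b$-term. When I multiply out the four products, the $a^{2}$ terms cancel (both equal $a^{2}r^{2n+3k}$) and the $b^{2}$ terms cancel as well (both equal $\varepsilon^{2}(-1)^{k}b^{2}r^{-2n-7k}$); only the four $ab$ cross terms survive.

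I would then group the cross terms as
$$
a\varepsilon b\bigl[(r^{-3k}-r^{k})+(-1)^{k}(r^{-k}-r^{-5k})\bigr]
= -a\varepsilon b\,(r^{4k}-1)\,r^{-5k}\bigl[r^{2k}-(-1)^{k}\bigr],
$$
and use $r^{4k}-1=(r^{2k}-1)(r^{2k}+1)$ to combine with the previously extracted $(r^{2k}-1)^{2}$. Finally, I would convert each of $r^{2k}-1$, $r^{2k}+1$, and $r^{2k}-(-1)^{k}$ into its symmetric form by pulling out a factor of $r^{k}$ (for instance, $r^{2k}-(-1)^{k}=r^{k}(r^{k}+(-1)^{k+1}/r^{k})$). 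The five copies of $r^{k}$ produced this way exactly absorb the $r^{-5k}$, and using $-\varepsilon=(-1)^{n}$ turns the prefactor into $ab(-1)^{n}$, matching~\eqref{gen} after the factor of $\tfrac12$ from~(1).

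I expect the only real obstacle to be bookkeeping: keeping careful track of how the parity exponents $(-1)^{n+jk+1}$ transform under shifts by $2k$ versus by odd multiples of $k$, so that the $a^{2}$ and $b^{2}$ terms actually cancel and the remaining $ab$ terms group into the nice factored form above. Once that bookkeeping is done correctly, the rest is routine Laurent-polynomial algebra and a symmetric-form rewrite, with no extra ideas required.
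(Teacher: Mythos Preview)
Your proposal is correct and follows essentially the same route as the paper: write the signed area as the second determinant in~(1), observe that every entry has the form $f(m+2k)-f(m)$ and factor out $(r^{2k}-1)$ from each (your bracketed factor $ar^{m}-b(-1)^{m+1}r^{-m-2k}$ is exactly the paper's $(ar^{2m+2k}+(-1)^{m}b)/r^{m+2k}$), then expand and regroup. The paper compresses the expansion into the phrase ``upon expanding and applying some judicious factoring and grouping,'' whereas you spell out the cancellation of the $a^{2}$ and $b^{2}$ terms and the factoring of the surviving $ab$ cross terms; your bookkeeping is accurate and the final symmetric-form rewrite matches~\eqref{gen}.
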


\begin{proof}
 For the general case, the signed area can be determined using:
 $$
A=\dfrac{1}{2} \left| \begin{matrix}   
       f(n+2k)-f(n)& f(n+3k)-f(n+k)\\
          f(n+4k)-f(n+2k)& f(n+5k)-f(n+3k)
       \end{matrix}   \right|.
$$

Note that all entries in the determinant are of the form $f(m+2k)-f(m)$ so applying (\ref{recursion}) it follows that 

$$
f(m+2k)-f(m)=(r^{2k}-1)\left(\frac{ar^{2m+2k}+(-1)^mb}{r^{m+2k}}\right).
$$

Thus

$$ A=\frac{(r^{2k}-1)^2}{2} \left| \begin{matrix} \dfrac{ar^{2n+2k}+(-1)^nb}{r^{n+2k}} & \dfrac{ar^{2n+4k}+(-1)^{n+k}b}{r^{n+3k}}  \vphantom{\displaystyle\int}\\ \vphantom{\displaystyle \int}
 \dfrac{ar^{2n+6k}+(-1)^nb}{r^{n+4k}} & \dfrac{ar^{2n+8k}+(-1)^{n+k}b}{r^{n+5k}}
 \end{matrix}\right|\\
 $$
 $\vphantom{\displaystyle\lim}\\ \vphantom{\displaystyle \lim}$
$$
=\dfrac{(r^{2k}-1)^2}{2r^{n+3k}r^{n+5k}}
\left| 
\begin{matrix} ar^{2n+3k}+(-1)^nbr^k & ar^{2n+4k}+(-1)^{n+k}b\\
ar^{2n+7k}+(-1)^nbr^k & ar^{2n+8k}+(-1)^{n+k}b
\end{matrix}\right|\\
$$
which upon expanding and applying some judicious factoring and grouping yields
\begin{equation}\label{gen}
 A= \frac{ab(-1)^n }{2}\left(r^{k}-\frac1{r^k}\right)^3\left(r^{k}+\frac1{r^k}\right)\left(r^{k}+\frac{(-1)^{k+1}}{r^k}\right).
\end{equation}

Note that this is the signed area so the $(-1)^n$ factor can be ignored when computing the value for $A$.
\vspace{.15in}
\end{proof}

If $a=b=\frac{1}{\sqrt{5}} $, and $r=\frac{1+\sqrt{5}}{2}$, then 
 $f(n)=F_n$, and
if ${a}=1$, ${b}=-1$, and $r=\frac{1+\sqrt{5}}{2}$ then $f(n)=L_n$.

\begin{equation}\label{k:even}
\text{ $k$ is even } \implies\begin{cases}a\left(r^k-\dfrac{1}{r^k}\right)=a\left(r^k+\dfrac{(-1)^{k+1}}{r^k}\right)=F_k \\ &\\
\left(r^k+\dfrac{1}{r^k}\right)=\left(r^k+\dfrac{(-1)(-1)^{k+1}}{r^k}\right)=L_k
\end{cases} 
\end{equation}
and
\begin{equation}\label{k:odd}
\text{$k$ is odd } \implies \begin{cases}a\left(r^k+\dfrac{1}{r^k}\right)=a\left(r^k+\dfrac{-1_(-1)^{k+1}}{r^k}\right)=F_k, \\ &\\
  \left(r^k-\dfrac{1}{r^k}\right)=\left(r^k+\dfrac{-1(-1)^{k+1}}{r^k}\right)=L_k
\end{cases} 
\end{equation}

We can now prove Theorem \ref{thm:fibtriangle}.
\begin{proof}
If $k$ is even, using (\ref{gen}) and (\ref{k:even}), the  area of the triangle will be:

$$
 A= \frac{ab}{2}\left(r^k-\frac{1}{r^k}\right)^4\left(r^k +\dfrac{1}{r^k}\right)\\
 =\dfrac{a^4}{2a^2}\left(r^k-\dfrac{1}{r^k}\right)^4\left(r^k +\dfrac{1}{r^k}\right)\\
 =\dfrac{5F_k^4L_k}{2}.
$$

For $k$  odd, using (\ref{gen}) and (\ref{k:odd}) the area of the triangle will be
$$
 A= \dfrac{a^2}{2}\left(r^k-\dfrac{1}{r^k}\right)^3\left(r^k +\dfrac{1}{r^k}\right)^2\\
 =\dfrac{F_k^2L_k^3}{2}.
$$
\end{proof}

\begin{thm}\label{thm:luctriangle}
The area formulas for triangles with Lucas vertices $(L_n, L_{n+k})$, $(L_{n+2k}, L_{n+3k})$,  and $(L_{n+4k}, L_{n+5k}) $ are
$$
\dfrac{25}{2}F_k^4L_k\  \text{ if $k$ is even and  }\  \dfrac{5F_k^2L_k^3}{2} \text{ if $k$ is odd} .
$$
\end{thm}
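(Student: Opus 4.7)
The plan is to apply Lemma \ref{gentir} directly, with the parameters that realize the Lucas sequence, namely $a=1$, $b=-1$, and $r=\phi=(1+\sqrt{5})/2$. With these choices the function $f(n)=ar^n+b(-1)^{n+1}/r^n$ reduces to the Binet formula $L_n = \phi^n + (-1)^n\phi^{-n}$, so the signed-area identity \eqref{gen} applies verbatim to the triangle with Lucas vertices. This will produce an area expression purely in terms of $\phi^k\pm \phi^{-k}$ and the constant $ab=-1$, and the rest of the proof is just translating those factors back into $F_k$ and $L_k$.

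Next I would record the Lucas analogues of the parity identities \eqref{k:even} and \eqref{k:odd}. Since $a=1$ now (as opposed to $1/\sqrt{5}$ for Fibonacci), from $\sqrt{5}F_k=\phi^k-(-1)^k\phi^{-k}$ and $L_k=\phi^k+(-1)^k\phi^{-k}$ one reads off, for $k$ even,
\[
r^k-\tfrac{1}{r^k}=\sqrt{5}\,F_k,\qquad r^k+\tfrac{1}{r^k}=L_k,\qquad r^k+\tfrac{(-1)^{k+1}}{r^k}=r^k-\tfrac{1}{r^k}=\sqrt{5}\,F_k,
\]
and for $k$ odd the roles of $\sqrt{5}F_k$ and $L_k$ swap, with $r^k+(-1)^{k+1}/r^k=r^k+1/r^k=\sqrt{5}F_k$.

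Substitution into \eqref{gen} then gives the result in one line for each parity. For even $k$, the cubed factor contributes $(\sqrt{5}F_k)^3$, the next factor contributes $L_k$, and the last factor contributes $\sqrt{5}F_k$; combining with $|ab|=1$ and discarding the overall sign from $(-1)^n$ and $ab=-1$ yields
\[
A=\tfrac{1}{2}(\sqrt{5}F_k)^4 L_k=\tfrac{25}{2}F_k^4 L_k.
\]
For odd $k$ the cubed factor is $L_k^3$ and the remaining two factors each contribute $\sqrt{5}F_k$, giving
\[
A=\tfrac{1}{2}L_k^3(\sqrt{5}F_k)^2=\tfrac{5}{2}F_k^2 L_k^3.
\]

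The only care needed is sign bookkeeping: the Lucas case has $ab=-1$ while the Fibonacci case had $ab=1/5$, and together with the $(-1)^n$ in \eqref{gen} this is harmless because we pass to absolute values. Conceptually, the proof shows that switching from Fibonacci to Lucas replaces each factor that was $F_k$ by $\sqrt{5}F_k$, which accounts for the uniform multiplicative factor of $5$ between the two sets of area formulas, so there is no genuine computational obstacle beyond reusing Lemma \ref{gentir}.
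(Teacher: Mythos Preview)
Your proposal is correct and follows essentially the same approach as the paper: apply Lemma~\ref{gentir} with the Lucas parameters $a=1$, $b=-1$, $r=\phi$, use the parity identities for $r^k\pm r^{-k}$ to convert the factors in \eqref{gen} into $\sqrt{5}\,F_k$ and $L_k$, and take the absolute value to discard the sign $ab(-1)^n$. If anything, your write-up is slightly cleaner than the paper's, since you make the $\sqrt{5}$ factors explicit rather than routing through the Fibonacci constant $a=1/\sqrt{5}$.
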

\begin{proof}

If $k$ is even, using (\ref{gen}) and (\ref{k:even})

$$
 A = \dfrac{\overline{a}\overline{b}}{2}\left(r^k-\dfrac{1}{r^k}\right)^4\left(r^k +\dfrac{1}{r^k}\right)\\ 
 =\dfrac{-a^4}{2a^4}\left(r^k+\dfrac{(-1)^{k+1}}{r^k}\right)^4\left(r^k +\dfrac{1}{r^k}\right).
$$
The absolute value gives the area as  $\dfrac{25}{2}F_k^4L_k$.

If $k$ is odd, using (\ref{gen}) and (\ref{k:odd})

$$
 A = \dfrac{\overline{a}\overline{b}}{2}\left(r^k-\dfrac{1}{r^k}\right)^3\left(r^k +\dfrac{1}{r^k}\right)^2
 =\dfrac{-a^2}{2a^2}\left(r^k+\dfrac{1}{r^k}\right)^2 \left(r^k-\dfrac{1}{r^k}\right)^3.
$$

The absolute value gives the area as $\dfrac{5F_k^2L_k^3}{2}$.
\end{proof}

\vspace{.25in}
We turn our attention to generalized Fibonacci numbers.
Consider the sequence $[G_n]$  where $G_0=t-s$, $G_1=s$, $G_2=t$, and $G_{n+1}=G_n+G_{n-1}$.
Using  (\ref{recursion}) and 
solving the linear system, $f(0)=(t-s)$ and $f(1)=s$, we find that if $a=\dfrac{s+\frac{(t-s)}{r}}{\sqrt{5}}$,  $b=\dfrac{s+(s-t)r}{\sqrt{5}}$,  and  $r=\frac{1+\sqrt{5}}{2}$, then $f(n)=G_n$. \\  Note that $ab=\dfrac{s^2+st-t^2}{5}$.

\begin{thm}\label{thm:gentriangle}
The area formulas for triangles with Generalized Fibonacci Numbers with vertices $(G_n, G_{n+k})$, $(G_{n+2k}, G_{n+3k})$,  and $(G_{n+4k}, G_{n+5k}) $ are
$$
\dfrac{5(s^2+st-t^2)}{2}F_k^4L_k
 \text{ if $k$ is even and  } \left(\dfrac{s^2+st-t^2}{2}\right)F_k^2L_k^3 \text{ if $k$ is odd} .
 $$
\end{thm}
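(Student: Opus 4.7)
The plan is to specialize Lemma \ref{gentir} to the generalized Fibonacci case. With the parameters $a=(s+(t-s)/r)/\sqrt{5}$, $b=(s+(s-t)r)/\sqrt{5}$, and $r=(1+\sqrt{5})/2$ identified in the paragraph preceding the theorem, we have $f(n)=G_{n}$, so \eqref{gen} immediately gives
\[
A=\frac{ab(-1)^{n}}{2}\left(r^{k}-\frac{1}{r^{k}}\right)^{3}\left(r^{k}+\frac{1}{r^{k}}\right)\left(r^{k}+\frac{(-1)^{k+1}}{r^{k}}\right).
\]
Only two ingredients then remain: the value of the scalar $ab$, and the evaluation of the three $r^{k}$-factors in terms of $F_{k}$ and $L_{k}$.

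For the scalar, a short direct computation -- already recorded in the excerpt just before the theorem -- yields $ab=(s^{2}+st-t^{2})/5$. For the $r^{k}$-factors, I would invoke the Binet-type identities $r^{k}-(-1)^{k}/r^{k}=\sqrt{5}\,F_{k}$ and $r^{k}+(-1)^{k}/r^{k}=L_{k}$, which are the content of \eqref{k:even} and \eqref{k:odd} with the Fibonacci-specific scaling $a=1/\sqrt{5}$ stripped out. When $k$ is even the three factors collapse to $(\sqrt{5}\,F_{k})^{3}\cdot L_{k}\cdot(\sqrt{5}\,F_{k})=25\,F_{k}^{4}L_{k}$; when $k$ is odd they collapse to $L_{k}^{3}\cdot(\sqrt{5}\,F_{k})\cdot(\sqrt{5}\,F_{k})=5\,F_{k}^{2}L_{k}^{3}$.

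Multiplying these by the prefactor $(s^{2}+st-t^{2})/10$ and passing to absolute values (which eliminates the $(-1)^{n}$) then yields $\tfrac{5(s^{2}+st-t^{2})}{2}F_{k}^{4}L_{k}$ when $k$ is even and $\tfrac{s^{2}+st-t^{2}}{2}F_{k}^{2}L_{k}^{3}$ when $k$ is odd, matching the stated formulas. The one place to be careful is simple book-keeping: the $a$-factors that appear inside \eqref{k:even} and \eqref{k:odd} are tied to the Fibonacci value $a=1/\sqrt{5}$, so they must be stripped out before the new product $ab=(s^{2}+st-t^{2})/5$ is reinserted, but no genuinely new identity beyond Lemma \ref{gentir} itself is needed.
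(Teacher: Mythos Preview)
Your proposal is correct and follows essentially the same approach as the paper: specialize Lemma~\ref{gentir} to the generalized Fibonacci sequence, use the precomputed value $ab=(s^{2}+st-t^{2})/5$, and then evaluate the $r^{k}$-factors via the Binet identities encoded in \eqref{k:even} and \eqref{k:odd}. Your version is in fact more careful about the bookkeeping (explicitly stripping the Fibonacci-specific $a=1/\sqrt{5}$ from those identities before reinserting the new $ab$), but no different idea is involved.
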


\begin{proof}

If $k$ is even, using (\ref{gen}) and (\ref{k:even}).

$$
 A= \dfrac{ab}{2}\left(r^k-\dfrac{1}{r^k}\right)^4\left(r^k +\dfrac{1}{r^k}\right)=\dfrac{5(s^2+st-t^2)}{2}F_k^4L_k.
$$

\vspace{.1in}

 If $k$ is  odd, using (\ref{gen}) and (\ref{k:odd})

\begin{align*}
 A & = \dfrac{ab}{2}\left(r^k-\dfrac{1}{r^k}\right)^3\left(r^k +\dfrac{1}{r^k}\right)^2\\
&=\dfrac{(s^2+s-t^2)}{2\sqrt{5}^2}\left(r^k+\dfrac{(-1)^{k+1}}{r^k}\right)^2\left(r^k+\dfrac{-1(-1)^{k+1}}{r^k}\right)^3.
\end{align*}

Hence $A=\left(\dfrac{s^2+st-t^2}{2}\right)F_k^2L_k^3$.
\end{proof}
\vspace{.25in}
The   Pell and Pell-Lucas numbers have Binet forms:  
$$
P_n=\frac{(1+\sqrt{2})^n-(1-\sqrt{2})^n}{2\sqrt{2}}=\frac{\alpha^n-\beta^n}{\alpha-\beta}
$$
and
$$
Q_n=(1+\sqrt(2))^n+(1-\sqrt{2})^n=\alpha^n+\beta^n.
$$
[\cite{Bicknell:primer}\cite{Horadam:pell}\cite{Koshy:Pell-book}]

Returning to the general function (\ref{recursion}), $ f(n)=ar^n+b\dfrac{(-1)^{n+1}}{r^n}$;  if $r=1+\sqrt{2}$ and $a=b=\dfrac{1}{2\sqrt{2}}$, then $f(n)=P_n$.
  
If $r=1+\sqrt{2},\,\,a=1 $, and $b=-1$, then $f(n)= Q_n$.

\begin{equation}\label{pell:even}
\text{ $k$ is even } \implies\begin{cases}a\left(r^k-\dfrac{1}{r^k}\right)=a\left(r^k+\dfrac{(-1)^{k+1}}{r^k}\right)=P_k \\ &\\
\left(r^k+\dfrac{1}{r^k}\right)=\left(r^k+\dfrac{(-1)(-1)^{k+1}}{r^k}\right)=Q_k
\end{cases} 
\end{equation}
and
\begin{equation}\label{pell:odd}
\text{$k$ is odd } \implies \begin{cases}a\left(r^k+\dfrac{1}{r^k}\right)=a\left(r^k+\dfrac{-1_(-1)^{k+1}}{r^k}\right)=P_k, \\ &\\
  \left(r^k-\dfrac{1}{r^k}\right)=\left(r^k+\dfrac{-1(-1)^{k+1}}{r^k}\right)=Q_k
\end{cases} 
\end{equation}

\begin{thm}\label{thm:pelltriangle}
The area formulas for triangles with Pell vertices $(P_n, P_{n+k})$, $(P_{n+2k}, P_{n+3k})$,  and $P_{n+4k},P_{n+5k}) $ are
$$
4P_k^4Q_k \text{ if $k$ is even and  } \dfrac{P_k^2Q_k^3}{2} \text{ if $k$ is odd} .
$$
\end{thm}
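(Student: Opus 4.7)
The plan is to apply Lemma \ref{gentir} with the Binet parameters for the Pell sequence and then simplify using the identities (\ref{pell:even}) and (\ref{pell:odd}), following the same pattern as the proof of Theorem \ref{thm:fibtriangle}. Setting $r = 1+\sqrt{2}$ and $a = b = \frac{1}{2\sqrt{2}}$ in (\ref{recursion}) yields $f(n) = P_n$, so the triangle in question has vertices of exactly the form required by Lemma \ref{gentir}, and its signed area is immediately (\ref{gen}) with these constants, noting that $ab = \frac{1}{8}$.

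The two cases then split along the parity of $k$. For $k$ even, the third factor $r^k + \frac{(-1)^{k+1}}{r^k}$ coincides with $r^k - \frac{1}{r^k}$, so (\ref{gen}) reduces to $\frac{ab}{2}\left(r^k - \frac{1}{r^k}\right)^4\left(r^k + \frac{1}{r^k}\right)$; substituting $a\left(r^k - \frac{1}{r^k}\right) = P_k$ and $r^k + \frac{1}{r^k} = Q_k$ from (\ref{pell:even}), the excess powers of $a$ combine with $ab$ to leave the constant $\frac{1}{2a^2} = 4$, producing $4P_k^4 Q_k$. For $k$ odd, the third factor instead equals $r^k + \frac{1}{r^k}$, merging with the existing linear factor to give $\left(r^k + \frac{1}{r^k}\right)^2$; applying (\ref{pell:odd}) with $a\left(r^k + \frac{1}{r^k}\right) = P_k$ and $r^k - \frac{1}{r^k} = Q_k$ and simplifying the constant $\frac{b}{2a} = \frac{1}{2}$ yields $\frac{P_k^2 Q_k^3}{2}$. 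The sign $(-1)^n$ from (\ref{gen}) is absorbed by taking absolute values, as noted in Lemma \ref{gentir}.

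There is no serious obstacle. Because (\ref{pell:even}) and (\ref{pell:odd}) are structurally identical to (\ref{k:even}) and (\ref{k:odd}), the argument is essentially a line-by-line replica of the Fibonacci proof, with the only substantive change being that $ab = \frac{1}{8}$ for Pell numbers rather than $\frac{1}{5}$ for Fibonacci numbers, which accounts for the different numerical coefficients in the final answers. The one place care is needed is the bookkeeping for the $(-1)^{k+1}$ factor in the third parenthesized term of (\ref{gen}), which must be matched with either the cube or the $(r^k + 1/r^k)$ factor depending on parity; the identities (\ref{pell:even})--(\ref{pell:odd}) are tailored precisely for this bookkeeping.
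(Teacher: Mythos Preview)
Your proof is correct and follows essentially the same approach as the paper: apply Lemma~\ref{gentir} with the Pell parameters $r=1+\sqrt{2}$, $a=b=\tfrac{1}{2\sqrt{2}}$, split on the parity of $k$, and simplify via (\ref{pell:even}) and (\ref{pell:odd}). Your bookkeeping of the constants $\tfrac{1}{2a^2}=4$ and $\tfrac{b}{2a}=\tfrac{1}{2}$ is in fact a bit more transparent than the paper's own write-up.
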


\begin{proof}
Using (\ref{gen}) and  (\ref{pell:even}),  
if $k$ is even:

$$
 A = \dfrac{ab}{2}\left(r^k-\dfrac{1}{r^k}\right)^4\left(r^k +\dfrac{1}{r^k}\right)=\frac{(2\sqrt{2})^2}{2(2\sqrt{2})^4}\left(r^k-\dfrac{1}{r^k}\right)^4\left(r^k +\dfrac{1}{r^k}\right)=4P_k^4Q_k.
$$

If $k$ is odd, using (\ref{gen}) and (\ref{pell:odd}),

$$
 A  = \dfrac{ab}{2}\left(r^k-\dfrac{1}{r^k}\right)^3\left(r^k +\dfrac{1}{r^k}\right)^2=\dfrac{1}{2(2\sqrt{2})^2}\left(r^k-\dfrac{1}{r^k}\right)^3\left(r^k +\dfrac{1}{r^k}\right)^2=\dfrac{P_k^2Q_k^3}{2}.
 $$
\end{proof}

\begin{thm}\label{thm:lucpelltriangle}
The area formulas for triangles with Lucas-Pell vertices $(Q_n, Q_{n+k})$, $(Q_{n+2k}, Q_{n+3k})$,  and $Q_{n+4k},Q_{n+5k}) $ are
$$
32P_k^4Q_k \text{ if $k$ is even and  } 8P_k^2Q_k^3 \text{ if $k$ is odd} .
$$
\end{thm}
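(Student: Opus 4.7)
The plan is to mirror the proof of Theorem~\ref{thm:pelltriangle} almost verbatim, changing only the values of $a$ and $b$ supplied to Lemma~\ref{gentir}. The paragraph preceding (\ref{pell:even}) already records that the choice $a=1$, $b=-1$, $r=1+\sqrt{2}$ in $f(n)=ar^n+b(-1)^{n+1}/r^n$ reproduces $f(n)=Q_n$, so Lemma~\ref{gentir} applies directly.

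For $k$ even, (\ref{gen}) collapses to $A=\tfrac{ab}{2}(r^k-1/r^k)^4(r^k+1/r^k)$. Substituting $ab=-1$ and invoking (\ref{pell:even})---with the caveat that because $a=1$ rather than the Pell value $a=1/(2\sqrt{2})$, the factor $r^k-1/r^k$ must now be read as $2\sqrt{2}\,P_k$ while $r^k+1/r^k=Q_k$ is unchanged---gives $|A|=\tfrac{1}{2}(2\sqrt{2})^4 P_k^4 Q_k = 32\,P_k^4 Q_k$, matching the stated even-$k$ constant. For $k$ odd, (\ref{gen}) reduces to $A=\tfrac{ab}{2}(r^k-1/r^k)^3(r^k+1/r^k)^2$, and (\ref{pell:odd}) analogously provides $r^k+1/r^k=2\sqrt{2}\,P_k$ and $r^k-1/r^k=Q_k$; assembling the factors then produces an explicit constant multiple of $P_k^2 Q_k^3$ of the form $\tfrac{1}{2}(2\sqrt{2})^2 P_k^2 Q_k^3$.

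The only obstacle is this bookkeeping step. The identities (\ref{pell:even}) and (\ref{pell:odd}) are stated with the Pell-specific coefficient $a=1/(2\sqrt{2})$ already absorbed on their right-hand sides, so in the Lucas-Pell setting one must reintroduce a factor of $2\sqrt{2}$ wherever a bare $P_k$ would have appeared in the Pell proof. Because that adjustment is easy to miscount---indeed my own derivation above produces $\tfrac{1}{2}(2\sqrt{2})^2 P_k^2 Q_k^3 = 4P_k^2Q_k^3$ in the odd case, not $8P_k^2Q_k^3$---I would double-check the odd-$k$ constant against a small numerical example (e.g.\ the vertices $(2,2),(6,14),(34,82)$ arising from $n=0$, $k=1$ give $|A|=32$, and $P_1=1$, $Q_1=2$) before committing to the closed-form coefficient. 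Apart from nailing down that single constant, the argument is a mechanical parallel of Theorem~\ref{thm:pelltriangle}.
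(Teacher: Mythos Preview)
Your approach is exactly the paper's: invoke Lemma~\ref{gentir} with $a=1$, $b=-1$, $r=1+\sqrt{2}$, then translate $r^k\pm 1/r^k$ into $P_k$ and $Q_k$ via (\ref{pell:even}) and (\ref{pell:odd}). The even-$k$ case you handle identically and correctly.

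In the odd case you have in fact caught an error in the statement (and in the paper's own proof). For $k$ odd one has $r^k-1/r^k=Q_k$ and $r^k+1/r^k=2\sqrt{2}\,P_k$, so
\[
|A|=\tfrac{1}{2}\,(r^k-1/r^k)^3(r^k+1/r^k)^2=\tfrac{1}{2}\,Q_k^3\,(2\sqrt{2})^2P_k^2=4P_k^2Q_k^3,
\]
exactly as you computed. The paper's proof writes the prefactor as $\dfrac{(2\sqrt{2})^2}{2(2\sqrt{2})^2}=\dfrac{1}{2}$ and then records the product as $8P_k^2Q_k^3$, silently losing that factor of $\tfrac{1}{2}$. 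Your numerical check with $n=0$, $k=1$ (vertices $(2,2)$, $(6,14)$, $(34,82)$, area $32=4\cdot1^2\cdot2^3$) confirms that the correct odd-$k$ constant is $4$, not $8$. So there is nothing wrong with your argument; the theorem as printed is off by a factor of $2$ in the odd case.
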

\begin{proof}
If $k$ is even:

$$
 A= \frac{ab}{2}\left(r^k-\frac{1}{r^k}\right)^4\left(r^k +\frac{1}{r^k}\right)=\frac{(2\sqrt{2})^4}{2(2\sqrt{2})^4}\left(r^k-\dfrac{1}{r^k}\right)^4\left(r^k +\dfrac{1}{r^k}\right)=32P_k^4Q_k.
 $$
\
If $k$ is odd:

$$
 A = \dfrac{ab}{2}\left(r^k-\dfrac{1}{r^k}\right)^3\left(r^k +\dfrac{1}{r^k}\right)^2=\dfrac{(2\sqrt{2})^2}{2(2\sqrt{2})^2}\left(r^k-\dfrac{1}{r^k}\right)^3\left(r^k +\dfrac{1}{r^k}\right)^2=8P_k^2Q_k^3.
 $$
\end{proof}

\vspace{.25in}
For triangles with Jacobsthal and Jacobsthal-Lucas \cite{Horadam:Jacobsthal} sequences as vertices, the
eigenvalues do not lend themselves to the structure using (\ref{recursion}).
So using the Jacobsthal vertices 
$(J_n, J_{n+k})$, $(J_{n+2k}, J_{n+3k})$,  and $(J_{n+4k}, J_{n+5k}) $, 
the Jacobsthal-Lucas vertices $(j_n, j_{n+k})$, $(j_{n+2k}, j_{n+3k})$,  
and $(j_{n+4k}, j_{n+5k}) $ and the Binet forms for the Jacobsthal
and Jacobsthal-Lucas numbers \cite{Horadam:Basic} :
$$
J_n=\frac{1}{3}(2^n-(-1)^n)
$$

and

$$
j_n=2^n+(-1)^n
$$

it is seen that the following holds:

\begin{thm}\label{thm:Jacob}
 The  points with Jacobsthal coordinates
 
  $$(J_n, J_{n+k}), (J_{n+2k}, J_{n+3k}), (J_{n+4k}, J_{n+5k}),\dots,(J_{n+2mk},J_{n+(2m+1)k}),$$
  
  are colinear  as are the points with Jacobsthal-Lucas coordinates
  
  $$(j_n, j_{n+k}), (j_{n+2k}, j_{n+3k}), (j_{n+4k}, j_{n+5k}), \cdots , (j_{n+2mk},j_{n+(2m+1)k}).$$
  
\end{thm}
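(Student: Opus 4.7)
The plan is to show collinearity by computing the slope of the line through any two of the listed points and verifying that this slope is the constant $2^k$, independent of which pair is chosen.

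First I would observe that the crucial feature of these sequences is that the ``parity part'' of the Binet form involves only $(-1)^n$, and since the index shift between consecutive listed points is $2k$ (always even), that parity part will cancel in every difference of consecutive points. Concretely, for any integer $m$,
\begin{equation*}
J_{m+2k}-J_m=\tfrac{1}{3}\bigl[2^{m+2k}-(-1)^{m+2k}\bigr]-\tfrac{1}{3}\bigl[2^m-(-1)^m\bigr]=\tfrac{2^m(2^{2k}-1)}{3},
\end{equation*}
since $(-1)^{m+2k}=(-1)^m$. Likewise $j_{m+2k}-j_m=2^m(2^{2k}-1)$.

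Next I would write down the slope between two consecutive listed points $\bigl(J_{n+2ik},J_{n+(2i+1)k}\bigr)$ and $\bigl(J_{n+2(i+1)k},J_{n+(2i+3)k}\bigr)$. Applying the identity above with $m=n+2ik$ in the denominator and $m=n+(2i+1)k$ in the numerator gives
\begin{equation*}
\frac{J_{n+(2i+3)k}-J_{n+(2i+1)k}}{J_{n+2(i+1)k}-J_{n+2ik}}=\frac{2^{n+(2i+1)k}(2^{2k}-1)/3}{2^{n+2ik}(2^{2k}-1)/3}=2^{k}.
\end{equation*}
Since this slope is the same constant $2^k$ for every $i=0,1,\dots,m-1$, the consecutive chords of the polygonal path have identical slope; therefore all the listed points lie on a single line through $(J_n,J_{n+k})$ of slope $2^k$. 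The identical computation with $j$ in place of $J$ handles the Jacobsthal--Lucas case, with the factor $\tfrac{1}{3}$ simply absent.

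There is no real obstacle here beyond recording the Binet cancellation; the essential point is that the Binet exponents $\alpha=2$ and $\beta=-1$ differ from the Fibonacci setting in that $\beta=-1$ has $\beta^{2k}=1$, which collapses the second term of Lemma \ref{gentir}. Equivalently, one could plug $r,-1/r$-style parameters into \eqref{gen} and note that the factor $\bigl(r^k-\tfrac{1}{r^k}\bigr)^3$ becomes zero when $r=-1/r$ (i.e.\ when $\beta=-1/\alpha$ fails and instead $\beta=-1$ forces one of the two Binet ``roots'' to satisfy $\beta^{2k}=1$); either bookkeeping yields the same conclusion. I would present the direct slope calculation since it is the cleanest.
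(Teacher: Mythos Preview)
Your slope computation is correct and rests on the same Binet cancellation as the paper's proof: both arguments boil down to $J_{m+2k}-J_m=\tfrac{2^m(2^{2k}-1)}{3}$, from which the slope between any two listed points is $2^k$; the paper phrases this as writing the line through the first two points and substituting the general $m$-th point, while you check consecutive slopes, but these are equivalent verifications. Your closing aside about shoehorning the Jacobsthal numbers into Lemma~\ref{gentir} is muddled---the recursion $J_n=J_{n-1}+2J_{n-2}$ is not of the form $f(n)=pf(n-1)+f(n-2)$ treated there, as the paper itself remarks---but since you explicitly set that route aside it does no harm to your actual argument.
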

\begin{proof}
 The equation of the line through the first two points with Jacobsthal coordinates is 
 
 $$y=\frac{J_{n+3k}-J_{n+k}}{J_{n+2k}-J_n}\left(x-J_n\right)+J_{n+k}.$$
 
 For $x=J_{n+2mk}$ with $m\geq4$  we have $y=\dfrac{J_{n+3k}-J_{n+k}}{J_{n+2k}-J_n}\left(J_{n+2mk}-J_n\right)+J_{n+k}$.  
 Using the  Binet form, this can be written as 
 
 $$y=\left(\dfrac{2^{n+3k}-2^{n+k}}{2^{n+2k}-2^n}\right)\left(\dfrac{2^{n+2mk}-2^n}{3}\right)+\left(\dfrac{2^{n+k}-(-1)^{n+k}}{3}\right)$$
 which simplifies to $y=\dfrac{2^{n+(2m+1)k}-(-1)^{n+k}}{3}$.  Since $n+k$ and $n+(2m+1)k$ are congruent mod 2, $y=J_{n+(2m+1)k}$.  Thus all the points in the set are colinear.
 The proof for the set of points with Jacobsthal-Lucas coordinates is similar.
\end{proof}
Therefore, there are no polygons with vertices using  Jacobsthal coordinates or Jacobsthal-Lucas coordinates of this form.

\section{Area of $m$-sided polygons with Fibonacci type vertices}

\noindent
Note:  This approach was motivated by problem B-1167 in The Fibonacci Quarterly. \cite{Opher-Shurki} 
\bigskip

\noindent
Problem Restated:

What is the area of an $m$-sided polygons whose vertices have coordinates \\
$(F_n,F_{n+k}),\, (F_{n+2k},F_{n+3k}),\,(F_{n+4k},F_{n+5k}); \dots, (F_{n+(m-2)k},F_{n+(m-1)k})$?\\

\bigskip

\noindent

Using the  surveyor's  formula  \cite{shoelace} for the area of polygons with vertex coordinates\\  $P_1(x_1,y_1),\,P_2(x_2,y_2),\, \dots\,, P_n(x_n,y_n)$ : 

\begin{equation}\label{shoe}
\frac{1}{2}\left|\sum_{i=1}^{n-1}x_iy_{i+1}+x_ny_1-\sum_{i=1}^{n-1}x_{i+1}y_i-x_1y_n\right|
\end{equation}
we will start with a very  general case as before.   Let $a,b,r$ be real numbers with $r\ne 0$. Define a real valued function on the integers as before (\ref{recursion}):
 
 $$f(n)=ar^n+b\dfrac{(-1)^{n+1}}{r^n}.$$

 Using equation (\ref{shoe}), the area of an $m$-sided polygon can be expressed as 
\begin{align*}
A=\frac{1}{2}\Big(&f(n)f(n+3k)+f(n+2k)f(n+5k)+\dots \\ 
&+f(n+(2m-4)k)f(n+(2m-1)k)\\
&+f(n+(2m-2)k))fn(n+k)\\
&-f(n+2k)f(n+k)-f(n+4k)f(n+3k)-\dots \\
&-f(n+(2m-2)k)f(n+(2m-3)k)-f(n)f(n+(m-1)k)\Big).
\end{align*}

Reordering the terms, we have 
\begin{align*}
A=\frac12& \Big(([f(n)f(n+3k)-f(n+2k)f(n+k)]\\
&+[f(n+2k)f(n+5k)-f(n+4k)f(n+3k)]\dots \\ 
&+[f(n+(2m-4)k)f(n+(2m-1)k- f(n+(2m-2)k)f(n+(2m-3)k)]\\  
&+[f(n+(2p-2)k))f(n+k)-f(n)f(n+(2p-1)k)] \big) .
\end{align*}

Note that  the first $(m-1)$  pairs  have the  form:

 $f(t)f(t+3k)-f(t+k)f(t+2k)$.  Applying equation (\ref{recursion}), 

\begin{align*}
&f(t)f(t+3k)-f(t+k)f(t+2k)\\
&=\left(ar^t+\frac{b(-1)^{t+1}}{r^t}\right)\left(ar^{t+3k}+\frac{b(-1)^{t+k+1}}{r^{t+3k}}\right)\\
&-\left(ar^{t+k}+\dfrac{b(-1)^{t+k+1}}{r^{t+k}}\right)\left(ar^{t+2k}+\frac{b(-1)^{t+1}}{r{t+2k}}\right)\\
&=ab(-1)^{t+1}\left[\frac{(-1)^{k}}{r^{3k}}+r^{3k}-\frac{1}{r^k}-(-1)^{k}\right].
\end{align*}
If  $k$ odd, this factors as 

 $$ab(-1)^{t+1}\left(r^k+\frac{1}{r^k}\right)\left(r^{2k}-\frac{1}{r^{2k}}\right).$$

For even $k$, it factors as

  $$ab(-1)^{t+1}\left(r^k-\frac{1}{r^k}\right)\left(r^{2k}-\frac{1}{r^{2k}}\right).$$

There will be $m-1$ of these terms.   

The last pair in the formula is as follows:
\begin{align*}
&f(n+(2m-2)k))f(n+k)-f(n+(2m-1)k)f(n)\\
&=\left(ar^{n+(2m-2)k}+\dfrac{b(-1)^{n+1}}{r^{n+(2m-2)k}}\right)\left(ar^{n+k}+\frac{b(-1)^{n+k+1}}{r^{n+k}}\right)\\
&-\left(ar^{n+(2m-1)k}+\dfrac{b(-1)^{n+k+1}}{r^{n+(2m-1)k}}\right)\left(ar^n\frac{b(-1)^{n+1}}{r^n}\right)\\
&=ab(-1)^n\left[(-1)^{k+1}r^{(2m-3)k}-\frac{(1}{r^{(2m-3)k}}+r^{(2m-1)k}-\frac{(-1)^{k+1}}{r^{(2m-1)k}}\right].
\end{align*}

For $k$  odd this factors to 

$$(r^k+\frac{1}{r^k})(r^{(2m-2)k}-\frac{1}{r^{(2m-2)k}}).$$

For $k$ even we have

 $$(r^k-\frac{1}{r^k})(r^{(2m-2)k}-\frac{1}{r^{(2m-2)k}}).$$

Therefore, the formula for the area of a polygon with $m$ sides and vertices 

$(f(n),f(n+k)), (f(n+2k),f(n+3k)+\cdots+(f(n+(2m-2)k),f(n+(2m-1)k)$  with $k$ even will be:

\begin{equation}\label{gen:even}
\left|\frac{1}{2}ab\left[(m-1)\left(r^k-\frac{1}{r^k}\right)\left(r^{2k}-\frac{1}{r^{2k}}\right)-
\left(r^k-\frac{1}{r^k}\right)\left(r^{(2m-2)k}-\frac{1}{r^{(2m-2)k}}\right)\right]\right|
\end{equation}

and for  $k$ odd:
\begin{equation}\label{gen:odd}
\left|\frac{1}{2}ab\left[(m-1)\left(r^k+\frac{1}{r^k}\right)\left(r^{2k}-\frac{1}{r^{2k}}\right)-
\left(r^k+\frac{1}{r^k}\right)\left(r^{(2m-2)k}-\frac{1}{r^{(2m-2)k}}\right)\right]\right|.
\end{equation}

\begin{thm}
The area for any $m$-sided polygon with Fibonacci coordinates as described   will be:

$$\frac{1}{2}\left|(m-1)F_kF_{2k}-F_kF_{(2m-2)k}\right|.$$

The area for any $m$-sided polygon with Lucas coordinates as described will be:

$$\frac{5}{2}\left|(m-1)F_kF_{2k}-F_kF_{(2m-2)k}\right|.$$
\end{thm}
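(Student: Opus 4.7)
The plan is to substitute the Binet-form parameter values into the two general polygon area formulas \eqref{gen:even} and \eqref{gen:odd} already derived above for the function $f(n)=ar^n+b(-1)^{n+1}/r^n$, using $a=b=1/\sqrt{5}$ for the Fibonacci case and $a=1$, $b=-1$ for the Lucas case, both with $r=(1+\sqrt{5})/2$. Once the correct Binet identifications are made, the two parity subcases should fuse into a single expression, and only the constant $ab$ will distinguish the Fibonacci conclusion from the Lucas conclusion.

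First I would observe that the factors $r^{2k}-1/r^{2k}$ and $r^{(2m-2)k}-1/r^{(2m-2)k}$ appearing in both \eqref{gen:even} and \eqref{gen:odd} have even exponents, so \eqref{k:even}, applied with $k$ replaced by $2k$ and by $(2m-2)k$ respectively, yields
\[
r^{2k}-\frac{1}{r^{2k}}=\sqrt{5}\,F_{2k},\qquad r^{(2m-2)k}-\frac{1}{r^{(2m-2)k}}=\sqrt{5}\,F_{(2m-2)k},
\]
regardless of the parity of $k$. Next, the parity-sensitive factor is $r^k-1/r^k$ in \eqref{gen:even} and $r^k+1/r^k$ in \eqref{gen:odd}; by \eqref{k:even} in the even case and \eqref{k:odd} in the odd case, whichever one is selected for the relevant parity equals $\sqrt{5}\,F_k$. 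Hence the bracketed expression inside either \eqref{gen:even} or \eqref{gen:odd} collapses to
\[
\sqrt{5}\,F_k\bigl[(m-1)\sqrt{5}\,F_{2k}-\sqrt{5}\,F_{(2m-2)k}\bigr]=5F_k\bigl[(m-1)F_{2k}-F_{(2m-2)k}\bigr],
\]
and the two parity cases merge into the single formula $\frac{5|ab|}{2}\,\bigl|(m-1)F_kF_{2k}-F_kF_{(2m-2)k}\bigr|$.

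Finally I would plug in the appropriate values of $ab$. For Fibonacci, $ab=1/5$, which yields the area $\frac{1}{2}\bigl|(m-1)F_kF_{2k}-F_kF_{(2m-2)k}\bigr|$. For Lucas, $ab=-1$ so $|ab|=1$, and the outer absolute value absorbs the sign, yielding the area $\frac{5}{2}\bigl|(m-1)F_kF_{2k}-F_kF_{(2m-2)k}\bigr|$. The main obstacle is purely bookkeeping: keeping the $\sqrt{5}$ factors straight during the Binet conversions and confirming that the $k$-even and $k$-odd subcases really do collapse to the same formula because both $r^k-1/r^k$ (for even $k$) and $r^k+1/r^k$ (for odd $k$) evaluate to $\sqrt{5}\,F_k$.
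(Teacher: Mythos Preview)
Your proposal is correct and follows exactly the approach the paper indicates: the paper's own proof is the single sentence ``follows directly from equations \eqref{k:even}, \eqref{k:odd}, \eqref{gen:even} and \eqref{gen:odd},'' and you have simply written out those substitutions in full. Your observation that the two parity cases collapse because $r^k-1/r^k$ (for even $k$) and $r^k+1/r^k$ (for odd $k$) both equal $\sqrt{5}\,F_k$, together with the fact that $2k$ and $(2m-2)k$ are always even, is precisely the mechanism the paper is invoking.
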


The proof follows directly from equations (\ref{k:even}), ({\ref{k:odd}), (\ref{gen:even}) and (\ref{gen:odd}).

\vspace{.3in}

\begin{thm}
The area for any $m$-sided polygon with Pell coordinates as described will be:

$$\frac{1}{2}\left|(m-1)P_kP_{2k}-P_kP_{(2m-2)k}\right|.$$

The area for any $m$-sided polygon with Pell-Lucas coordinates as described will be:

$$4\left|(m-1)P_kP_{2k}-P_kP_{(2m-2)k}\right|.$$
\end{thm}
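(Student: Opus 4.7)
The plan is to apply the general $m$-gon area formulas (\ref{gen:even}) and (\ref{gen:odd}) with the appropriate Binet parameters, exactly as in the Fibonacci/Lucas version of this theorem. For Pell numbers one takes $r = 1+\sqrt{2}$ and $a = b = \tfrac{1}{2\sqrt{2}}$, so $ab = 1/8$; for Pell-Lucas numbers one takes the same $r$ with $a = 1$, $b = -1$, so $ab = -1$. Because $r$ is identical in both cases, every hyperbolic factor $r^{jk} \pm r^{-jk}$ inside the brackets has the same numerical value for both sequences; only the prefactor $ab$ differs.

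First I would observe that the exponents $2k$ and $(2m-2)k$ appearing in (\ref{gen:even}) and (\ref{gen:odd}) are always even, so the even-case identities from (\ref{pell:even}) always apply to those factors, yielding $r^{2k} - r^{-2k} = 2\sqrt{2}\,P_{2k}$ and $r^{(2m-2)k} - r^{-(2m-2)k} = 2\sqrt{2}\,P_{(2m-2)k}$. The remaining factor $r^{k} - r^{-k}$ in (\ref{gen:even}) or $r^{k} + r^{-k}$ in (\ref{gen:odd}) is likewise $2\sqrt{2}\,P_k$ by the appropriate case of (\ref{pell:even}) or (\ref{pell:odd}) once the parity of $k$ is taken into account. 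Thus in either parity the bracketed expression collapses to $(2\sqrt{2})^{2}\bigl[(m-1)P_k P_{2k} - P_k P_{(2m-2)k}\bigr] = 8\bigl[(m-1)P_k P_{2k} - P_k P_{(2m-2)k}\bigr]$.

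Multiplying by $\tfrac{1}{2}|ab|$ then yields both claimed formulas: for Pell, $\tfrac{1}{2} \cdot \tfrac{1}{8} \cdot 8 = \tfrac{1}{2}$, and for Pell-Lucas, $\tfrac{1}{2} \cdot 1 \cdot 8 = 4$. The proof is a direct substitution once the identities above are in place; the main bookkeeping point will be to recognize that the hyperbolic factors $r^{jk} \pm r^{-jk}$ are identical numerical quantities for both sequences and are most naturally expressed in terms of the Pell numbers (with the factor $2\sqrt{2}$ inherited from the Pell value of $a$). Passing from Pell to Pell-Lucas then amounts only to replacing $|ab| = 1/8$ by $|ab| = 1$, which explains the factor of $8$ separating the two area coefficients.
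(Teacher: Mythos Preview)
Your proposal is correct and follows exactly the approach indicated in the paper, which simply records that the result follows directly from equations (\ref{pell:even}), (\ref{pell:odd}), (\ref{gen:even}), and (\ref{gen:odd}). You have in fact supplied more detail than the paper does, correctly noting that $2k$ and $(2m-2)k$ are always even so that those factors are always $2\sqrt{2}\,P_{2k}$ and $2\sqrt{2}\,P_{(2m-2)k}$, and that the parity-dependent factor collapses to $2\sqrt{2}\,P_k$ in both cases.
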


 The proof follows directly from equations (\ref{pell:even}), ({\ref{pell:odd}), (\ref{gen:even}) and (\ref{gen:odd}).

\begin{thm}
The area for any $m$-sided polygon with Generalized Fibonacci coordinates as described in Section 3 will be:

$$\frac{s^2+st-t^2}{2}\left|(m-1)F_kF_{2k}-F_kF_{(2m-2)k}\right|.$$

\end{thm}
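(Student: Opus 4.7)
The plan is to invoke the general $m$-sided polygon area formulas (\ref{gen:even}) and (\ref{gen:odd}) with the parameters corresponding to the generalized Fibonacci sequence. From the discussion preceding Theorem~\ref{thm:gentriangle}, the choices $r = (1+\sqrt{5})/2$, $a = (s + (t-s)/r)/\sqrt{5}$, and $b = (s + (s-t)r)/\sqrt{5}$ realize $G_n$ as $f(n)$ and yield $ab = (s^2 + st - t^2)/5$, so it only remains to simplify the bracketed expressions in (\ref{gen:even}) and (\ref{gen:odd}) to recover the claimed formula.

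First I observe that, regardless of the parity of $k$, the exponents $2k$ and $(2m-2)k$ are always even, so the identities in (\ref{k:even}) apply directly and give $r^{2k} - 1/r^{2k} = \sqrt{5}\, F_{2k}$ together with $r^{(2m-2)k} - 1/r^{(2m-2)k} = \sqrt{5}\, F_{(2m-2)k}$. For the remaining factor involving $r^k$: if $k$ is even, then (\ref{k:even}) gives $r^k - 1/r^k = \sqrt{5}\,F_k$, while if $k$ is odd, then (\ref{k:odd}) gives $r^k + 1/r^k = \sqrt{5}\,F_k$. In either case, the bracketed quantity appearing in (\ref{gen:even}) or (\ref{gen:odd}) reduces to $5\bigl[(m-1) F_k F_{2k} - F_k F_{(2m-2)k}\bigr]$.

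Combining with the prefactor $\frac{1}{2} ab = (s^2 + st - t^2)/10$ and absorbing any sign under the absolute value yields the claimed formula $\dfrac{s^2+st-t^2}{2}\left|(m-1)F_kF_{2k}-F_kF_{(2m-2)k}\right|$.

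There is no serious obstacle here, because (\ref{gen:even}) and (\ref{gen:odd}) already encapsulate the full shoelace-type computation for the general function $f$. The only care required is the parity bookkeeping so that the correct $F$-number identity is applied at each occurrence of $r^j \pm 1/r^j$, and the key simplifying observation is that the single $r^k \pm 1/r^k$ factor collapses to $\sqrt{5}\,F_k$ in both parity cases, which is precisely what makes the even-$k$ and odd-$k$ subcases coalesce into the single closed form stated in the theorem.
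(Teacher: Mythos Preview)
Your proposal is correct and follows exactly the route the paper indicates: the paper's own proof is simply the one-line remark that the result follows directly from equations (\ref{k:even}), (\ref{k:odd}), (\ref{gen:even}) and (\ref{gen:odd}), and you have supplied the straightforward details of that deduction. Your parity observation that $2k$ and $(2m-2)k$ are always even, so that both (\ref{gen:even}) and (\ref{gen:odd}) collapse to the same $F$-expression, is precisely the point that makes the single closed form work.
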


 The proof follows directly from equations (\ref{k:even}), ({\ref{k:odd}), (\ref{gen:even}) and (\ref{gen:odd}).

\section{Triangles with Polygonal Number Co-ordinates}
The general polygonal numbers are defined by the formula 
$$
P_n^{(r)}=\frac{n(n(r-2)-(r-4))}{2},  \text{  for  }r\geq3.
$$
\cite{Deza-Deza}
\begin{thm}\label{thm:poly} For  any triangle with polygonal number vertices $(P_n, P_{n+k})$, $(P_{n+2k}, P_{n+3k})$,  and 
 $(P_{n+4k}, P_{n+5k}) $,  the area is
 \begin{equation}\label{poly_area}
 A= 4(r - 2)^2 k^4.
 \end{equation}
 \end{thm}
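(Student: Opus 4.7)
The plan is to mimic the strategy of Lemma~\ref{gentir} --- write down the signed-area determinant and exploit the algebraic form of the entries --- but here the task is substantially simpler, because $P_n^{(r)}$ is a quadratic polynomial in $n$ and so differences of the form $P_{m+j}^{(r)}-P_m^{(r)}$ collapse to linear expressions in $m$, with no Binet-type bookkeeping required.

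First I would rewrite $P_m^{(r)}=\alpha m^2-\beta m$ where $\alpha=(r-2)/2$ and $\beta=(r-4)/2$, treating $r$ as a fixed parameter. Starting from the signed-area determinant
\begin{equation*}
A=\tfrac{1}{2}\left|\det\begin{pmatrix} P_{n+2k}^{(r)}-P_n^{(r)} & P_{n+3k}^{(r)}-P_{n+k}^{(r)} \\ P_{n+4k}^{(r)}-P_{n+2k}^{(r)} & P_{n+5k}^{(r)}-P_{n+3k}^{(r)} \end{pmatrix}\right|,
\end{equation*}
I would observe that every entry has the shape $P_{m+2k}^{(r)}-P_m^{(r)}$. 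A direct expansion yields $P_{m+2k}^{(r)}-P_m^{(r)}=2k\bigl(2\alpha m+2\alpha k-\beta\bigr)$, so I can pull a factor of $2k$ out of each row of the matrix.

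What remains is a $2\times 2$ determinant whose four entries are values of the single affine function $L(m)=2\alpha m+(2\alpha k-\beta)$ at $m=n,\,n+k,\,n+2k,\,n+3k$ --- an arithmetic progression with common difference $2\alpha k$. Setting $v=L(n)$ and expanding gives $v(v+6\alpha k)-(v+2\alpha k)(v+4\alpha k)=-8\alpha^2k^2$, independent of $n$. Assembling the prefactors, $A=\tfrac{1}{2}\cdot(2k)^2\cdot 8\alpha^2k^2=16\alpha^2k^4=4(r-2)^2k^4$.

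I do not foresee a genuine obstacle. The single point worth flagging is that the $n$-dependence inside $L(n),L(n+k),L(n+2k),L(n+3k)$ must cancel in the $2\times 2$ determinant --- which it does, thanks to the arithmetic-progression structure --- so that the resulting area depends only on $r$ and $k$, as claimed.
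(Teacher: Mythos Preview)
Your proof is correct and follows essentially the same approach as the paper: both start from the $2\times 2$ signed-area determinant, reduce the entries via the quadratic formula for $P_m^{(r)}$ to linear expressions, and then expand. Your version is a bit tidier --- the row reduction makes every entry a difference $P_{m+2k}^{(r)}-P_m^{(r)}$, and the arithmetic-progression observation spares you the brute-force polynomial expansion the paper carries out --- but the underlying strategy is the same.
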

\begin{proof}
Analogous to the proof of the Fibonacci case, the area is the absolute value of the
determinant
 $$
 A =\dfrac{1}{2} \left| \begin{matrix} 
       P_{n+2k}-P_n& P_{n+3k}-P_{n+k}\\
          P_{n+4k}-P_n& P_{n+5k}-P_{n+k}
       \end{matrix}   \right|
$$
which upon expansion yields the following
$$
A=\frac{1}{2}\left((P_{n+2k}-P_n)(P_{n+5k}-P_{n+k})-(P_{n+4k}-P_n)(P_{n+3k}-P_{n+k})\right)
$$
$$
= \frac{1}{2}[(k[(r-2)(2n+2k)-(r-4)])(2k[(r-2)(2n+6k)-(r-4)])
$$
$$-(k[(r-2)(2n+4k)-(r-4)])(2k[(r-2)(2n+4k)-(r-4)])]
$$
$$=\frac{1}{2}[(2k^2[(r-2)^2(4n^2+16nk+12k^2)+(r-4)^2-(r-4)(r-2)(4n+8k)])
$$
$$-(2k^2[(r-2)^2(4n^2+16nk+16k^2)+(r-4)^2-(r-4)(r-2)(4n+8k)])]=-\frac{8k^4(r-2)^2}{2}
$$
The absolute value yields $A=4(r-2)^2k^4$.
\end{proof}
So,  it follows that

Triangles with Triangular Number Co-ordinates have area $A=4k^4$.

Triangles with Square Number Co-ordinates have area $A=16k^4$.

Triangles with Pentagonal Number Co-ordinates have area $A=36k^4$.

Triangles with Hexagonal Number Co-ordinates have area $A=64k^4$.

Triangles with Heptagonal Number Co-ordinates have area $A=100k^4$.

\vspace{.2in}

\section{Area of $m$-sided polygons with Polygonal Number Co-ordinates}
The following table summarizes  results for the area of triangles, quadrilaterals, pentagons, hexagons, and heptagons using  polygonal vertices in the given pattern.

\vspace{.2in}
\begin{center}
 
\begin{tabular}{| c | c | c | c | c | c |}
\hline
$m$ & Triangular & Square & Pentagonal & Hexagonal & Heptagonal\\
\hline
3 & $4k^4$ & 1$6k^4$ & $36k^4$ & $64k^4$ & $ 100k^4$\\
\hline
4 & $ 16 k^4$  & $64k^4$  & $144k^4$  & $256k^4$  & $400k^4$ \\
\hline
5 & $40k^4$  & $160k^4$  & $360k^4$  & $640k^4$  & $1000k^4$ \\
\hline
6 & $ 80k^4$  & $320k^4$  & $ 720k^4$  & $1280k^4$  & $2000k^4$ \\
\hline
7 & $140k^4$  & $560k^4$  & $ 1260k^4$  & $ 2240 k^4$  & $ 3500k^4$ \\
\hline
\end{tabular}

\end{center}

\vspace{.2in}

Note that the coefficients of the $m$-sided polygon is 4 times the sequence\\  $\{1,4,10,20,35,...\}$
which can be written as $ \dfrac{m(m -1)(m -2)}{6}$ and is the general term for the tetrahedral (or
triangle pyramid) number sequence having OEIS number A000292. Thus it is reasonable to
conjecture the following:

\begin{thm}
 The area of any m-sided polygon ($m\geq3$)with polygonal number vertices 
 $$
 (P_n, P_{n + k}), (P_{n + 2 k}, P_{n + 3 k}), \dots (P_{n +(2m-2) k}, P_{n +(2m-1) k})
 $$ 
  is $\dfrac{4m(m - 1)(m - 2)(r - 2)^2 k^4}{6}$ .
\end{thm}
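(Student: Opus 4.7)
My plan is to triangulate the $m$-gon from the single vertex $V_1 = (P_n, P_{n+k})$ and reduce the area to a sum of $m-2$ signed triangle areas, each computable in closed form. Write $V_{j+1} = (P_{n+2jk},\,P_{n+(2j+1)k})$ for $j = 0,1,\ldots,m-1$. A routine rearrangement of the shoelace formula \eqref{shoe} shows that the signed area of the polygon equals $\sum_{j=1}^{m-2} T_j$, where $T_j$ denotes the signed area of the triangle $V_1 V_{j+1} V_{j+2}$; this decomposition is purely algebraic and requires no convexity or simplicity hypothesis on the polygon.

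Next, I will evaluate $T_j$ in closed form. Applying the factorization
$$
P_s - P_t = \tfrac{1}{2}(s-t)\bigl[(r-2)(s+t) - (r-4)\bigr]
$$
to each entry of the $2\times 2$ determinant defining $2T_j$, every entry comes out to $c k \cdot A_i$, where $c \in \{j, j+1\}$ is read off the index difference and
$$
A_i := (r-2)(2n + 2ik) - (r-4).
$$
After pulling out the factors of $k$ and $c$, the determinant collapses to $j(j+1)k^2\bigl(A_j A_{j+2} - A_{j+1}^2\bigr)$. Since $\{A_i\}$ is arithmetic in $i$ with common difference $2(r-2)k$, direct expansion yields the key identity
$$
A_j A_{j+2} - A_{j+1}^2 = -4(r-2)^2 k^2,
$$
independent of both $n$ and $j$. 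Consequently $T_j = -2j(j+1)(r-2)^2 k^4$, which specializes to Theorem~\ref{thm:poly} at $j=1$.

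Finally, summing with the classical identity $\sum_{j=1}^{m-2} j(j+1) = \tfrac{m(m-1)(m-2)}{3}$, the absolute area comes out to
$$
\left|\sum_{j=1}^{m-2} T_j\right| = \frac{2m(m-1)(m-2)(r-2)^2 k^4}{3} = \frac{4m(m-1)(m-2)(r-2)^2 k^4}{6},
$$
in agreement with the conjectured formula. The main obstacle is really just the algebraic step $A_j A_{j+2} - A_{j+1}^2 = -4(r-2)^2 k^2$: one must notice the arithmetic-progression structure in $i$ so that the apparently $n$-dependent determinant collapses to a constant in $n$. Once this is in hand, all the $n$-dependence cancels and the remainder is bookkeeping.
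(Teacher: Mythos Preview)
Your proof is correct and follows essentially the same fan triangulation from $V_1$ as the paper; the paper packages the identical decomposition as an induction on $m$, the inductive step being exactly your computation of $T_{t-1}$. Your observation that the $A_i$ form an arithmetic progression, giving $A_jA_{j+2}-A_{j+1}^2=-4(r-2)^2k^2$ immediately, streamlines the algebra compared with the paper's brute-force expansion, but the underlying argument is the same.
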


\begin{proof} 

For the general case, it is convenient
to re-write (\ref{shoe}) as

\begin{equation}\label{shoe2}
\frac{1}{2}\left|\sum_{i=1}^{m-1}x_iy_{i+1}+x_my_1-\sum_{i=1}^{m-1}x_{i+1}y_i-x_1y_m\right|.
\end{equation}

The proof is by  induction on $m$.  The statement has been shown to be true for $m=3.$ Assume that the area of a polygon with $m=t$ is $\dfrac{4t(t- 1)(t- 2)(r - 2)^2 k^4}{6}$.  
Note that the area of the $t+1$ sided polygons will be the area of the $t$ sided polygon plus the area of the triangle with coordinates $(P_n,\,P_{n+k}),\,(P_{n+(2t-2)k},\,P_{n+(2t-1)k})$, and$(P_{2t},\,P_{n+(2t+1)k})$
which is 
 $$
 A =\dfrac{1}{2} \left| \begin{matrix} 
       P_{n+(2t-2)k}-P_n& P_{n+(2t-1)k}-P_{n+k}\\
          P_{n+(2t)k}-P_n& P_{n+(2t+1)k}-P_{n+k}
       \end{matrix}   \right|.
$$
On expansion, this yields
$$A=\frac{1}{2}\left((P_{n+(2t-2)k}-P_n)(P_{n+(2t+1)k}-P_{n+k})-( P_{n+(2t-1)k}-P_{n+k})( P_{n+(2t-1)k}-P_{n+k})\right)$$
$$=\frac{1}{2}[(t-1)k[2(r-2)((t-1)k+n)-(r-4)]kt[2(r-2)((t+1)k+n)-((r-4)]$$
$$-k(t-1)[2(r-2)(tk+n)-(r-4)]kt[2(r-2)(tk+n)-(r-4)]]$$
$$=\frac{1}{2}k^2t(t-1)[[2(r-2)((t-1)k+n)-(r-4)][2(r-2)((t+1)k+n)-((r-4)]$$
$$-[2(r-2)(tk+n)-(r-4)][2(r-2)(tk+n)-(r-4)]]$$
$$=\frac{1}{2}k^2t(t-1)(-4(r-2)^2k^2=\frac{-4(r-2)^2k^4(t-1)(3t)}{6}.$$

The absolute value yields 
$$\frac{4(r-2)^2k^4(t-1)(3t)}{6}.$$
Adding this area to the $t$-sided polygon gives the area of the $t+1$-sided polygon as 
$$\frac{4(t+1)(t)(t-1)(r-2)^2k^4}{6}$$ as required.

\end{proof}

\vspace{.2in}

\section{Concluding Comments}

Whereas the second order sequences seem to lend themselves to obtainable formulas for
polygonal areas, the third order sequences do not appear to be as obliging. For example, three popular third order sequences yield the following areas of triangle with the indicated sequential coordinates:
Area of triangles with $n=1$ and the indicated value for $k$:
\begin{center}
  \begin{tabular}{ |  l  |  l  |  l  |  l  | }

    \hline
      &  &  &  \\
    $k$ & Tribonacci  &  Perrin                &  Padovan \\   
      &                       &                            &  \\ \hline 
           &  &  &  \\
    1 & 3                    & $\dfrac{9}{2}$     & 0 \\ 
         &                       &                            &  \\ \hline 
           &  &  &  \\
    2 & 64                  & $\dfrac{47}{2}$   & 1 \\ 
     &                       &                            &  \\ \hline 
       &  &  &  \\
   3 & $849=3(283)$ & $\dfrac{31}{9}$   &$15=3\cdot 5$\\
    &                       &                            &  \\ \hline 
      &  &  &  \\
4  &  $23360=5\cdot64\cdot73$  &  $\dfrac{298}{2}=149$ & $44=4\cdot11$\\
 &                       &                            &  \\ \hline 
   &  &  &  \\
5 &  $509729 = 11\cdot149\cdot311$ & $\dfrac{1629}{2} = \dfrac{9\cdot181}{2}$ &  $95 = 5\cdot19$\\ 
 &                       &                            &  \\ \hline 
   &  &  &  \\
6 & $10049160 = 3\cdot5\cdot8\cdot11\cdot23\cdot331$ & $ \dfrac{9640}{2} = \dfrac{5\cdot8\cdot241}{2} = 4820$ &  $810 = 2\cdot5\cdot81$\\
 &                       &                            &  \\ \hline
    \hline
  \end{tabular}
\end{center}

Comparing the Perrin numbers with the Padovan numbers yields no patterns. Nor does a
comparison of the Tribonacci numbers with themselves. We leave the investigation of the
quadrilateral cases to the interested reader.

\providecommand{\bysame}{\leavevmode\hbox to3em{\hrulefill}\thinspace}
\providecommand{\MR}{\relax\ifhmode\unskip\space\fi MR }
\providecommand{\MRhref}[2]{%
  \href{http://www.ams.org/mathscinet-getitem?mr=#1}{#2}
}
\providecommand{\href}[2]{#2}

\end{document}